\newtheorem{theorem}{Theorem}
\theoremstyle{plain}
\newtheorem{corollary}{Corollary}
\numberwithin{equation}{section}
\begin{document}
\title[Further refinements and reverses of the young and Heinz inequalities]{%
Further generalizations, refinements, and reverses of the young and Heinz
inequalities}
\author{Yousef Al-Manasrah}
\address{Department of Mathematics, Al-Zaytoonah University of Jordan}
\email{y.manasrah@zuj.edu.jo}
\author{Fuad Kittaneh}
\address{Department of Mathematics, The University of Jordan, Amman, Jordan}
\email{fkitt@ju.edu.jo}
\date{}
\subjclass[2010]{ 15A60, 26A51, 26D20}
\keywords{ Convex function, Young inequality, Heinz inequality, positive \ \\
\ semidefinite matrix, unitarily invariant norm. }

\begin{abstract}
In this paper, we give a new inequality for convex functions of real
variables, and we apply this inequality to obtain considerable
generalizations, refinements, and reverses of the Young and Heinz
inequalities for positive scalars. Applications to unitarily invariant norm
inequalities involving positive semidefinite matrices are also given.
\end{abstract}

\maketitle

\section{Introduction}

The numerical Young's inequality for positive real numbers says that%
\begin{equation}
a^{\nu }b^{1-\nu }\leq \nu a+(1-\nu )b,  \label{ineq 1}
\end{equation}%
where $a,b>0$\ and $\ 0\leq \nu \leq 1$. Equivalently,%
\begin{equation*}
ab\leq \frac{a^{p}}{p}+\frac{b^{q}}{q},
\end{equation*}%
where $p,q>0$ and $\frac{1}{p}+\frac{1}{q}=1$.

Kittaneh and Manasrah in \cite{R6} and \cite{R7}, respectively, refined the
inequality (\ref{ineq 1}) and gave a reverse of it in the following forms:%
\begin{equation}
a^{\nu }b^{1-\nu }+r_{0}\left( \sqrt{a}-\sqrt{b}\right) ^{2}\leq \nu
a+(1-\nu )b  \label{ineq 2}
\end{equation}%
and 
\begin{equation}
\nu a+(1-\nu )b\leq a^{\nu }b^{1-\nu }+R_{0}\left( \sqrt{a}-\sqrt{b}\right)
^{2},  \label{ineq 3}
\end{equation}%
where $r_{0}=\min \left\{ \nu ,1-\nu \right\} $\textit{\ and }$R_{0}=\max
\left\{ \nu ,1-\nu \right\} $.

For our purpose in this paper, the inequalities (\ref{ineq 2}) and (\ref%
{ineq 3}) are combined and expressed so that%
\begin{equation}
r_{0}\left( (a+b)-2\sqrt{ab}\right) \leq \nu a+(1-\nu )b-a^{\nu }b^{1-\nu
}\leq R_{0}\left( (a+b)-2\sqrt{ab}\right) .  \label{ineq 4}
\end{equation}

In \cite{R4}, Hirzallah and Kittaneh proved that if $a,b>0$\ and $\ 0\leq
\nu \leq 1$, then%
\begin{equation}
\left( a^{\nu }b^{1-\nu }\right) ^{2}+r_{0}^{2}\left( a-b\right) ^{2}\leq
(\nu a+(1-\nu )b)^{2},  \label{ineq 5}
\end{equation}%
where $r_{0}=\min \left\{ \nu ,1-\nu \right\} .$

In \cite{R7}, Kittaneh and Manasrah gave a reverse of the inequality (\ref%
{ineq 5}) in the following form:%
\begin{equation}
\left( \nu a+(1-\nu )b\right) ^{2}\leq \left( a^{\nu }b^{1-\nu }\right)
^{2}+R_{0}^{2}\left( a-b\right) ^{2},  \label{ineq 6}
\end{equation}%
where $R_{0}=\max \left\{ \nu ,1-\nu \right\} $.

Also, for our purpose in this paper, the inequalities (\ref{ineq 5}) and (%
\ref{ineq 6}) are combined and expressed so that%
\begin{equation}
r_{0}^{2}\left( (a+b)^{2}-4ab\right) \leq \left( \nu a+(1-\nu )b\right)
^{2}-\left( a^{\nu }b^{1-\nu }\right) ^{2}\leq R_{0}^{2}\left(
(a+b)^{2}-4ab\right) .  \label{ineq 7}
\end{equation}

Recently, the authors \cite{R1} proved the following theorem.

\begin{theorem}
\bigskip \label{T1.1}If $a,b>0$\ and $\ 0\leq \nu \leq 1$, then for $%
m=1,2,3,...$, we have 
\begin{equation}
\left( a^{\nu }b^{1-\nu }\right) ^{m}+r_{0}^{m}\left( a^{\frac{m}{2}}-b^{%
\frac{m}{2}}\right) ^{2}\leq \left( \nu a+(1-\nu )b\right) ^{m},
\label{ineq 8}
\end{equation}%
where $r_{0}=\min \left\{ \nu ,1-\nu \right\} .$
\end{theorem}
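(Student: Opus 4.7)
The plan is to prove Theorem \ref{T1.1} by induction on $m$, using (\ref{ineq 2}) and (\ref{ineq 5}) as the base cases $m=1$ and $m=2$. Writing $A:=\nu a+(1-\nu)b$ and $G:=a^{\nu}b^{1-\nu}$, the inductive step advances from $m-1$ to $m+1$, so that the odd and even indices are generated from the two base cases in parallel. The key algebraic identity is $A^{m+1}=A^{m-1}\cdot A^{2}$; after lower-bounding $A^{2}$ by (\ref{ineq 5}) and applying the induction hypothesis only to the $A^{m-1}G^{2}$ piece that results, one obtains
\[
A^{m+1}\ \ge\ G^{m+1}+r_{0}^{m-1}G^{2}\bigl(a^{(m-1)/2}-b^{(m-1)/2}\bigr)^{2}+r_{0}^{2}A^{m-1}(a-b)^{2}.
\]
The crucial point is that the factor $A^{m-1}$ multiplying $(a-b)^{2}$ is kept in its exact form rather than being replaced by its induction-hypothesis lower bound; multiplying two symmetric lower bounds would discard too much slack.

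Closing the induction then reduces to checking the residual inequality
\[
r_{0}^{m-1}G^{2}\bigl(a^{(m-1)/2}-b^{(m-1)/2}\bigr)^{2}+r_{0}^{2}A^{m-1}(a-b)^{2}\ \ge\ r_{0}^{m+1}\bigl(a^{(m+1)/2}-b^{(m+1)/2}\bigr)^{2}.
\]
Pulling the common factor $(\sqrt{a}-\sqrt{b})^{2}$ out of every power difference via the identities $a^{k/2}-b^{k/2}=(\sqrt{a}-\sqrt{b})\sum_{j=0}^{k-1}\sqrt{a}^{\,k-1-j}\sqrt{b}^{\,j}$ and $a-b=(\sqrt{a}-\sqrt{b})(\sqrt{a}+\sqrt{b})$ recasts this as a homogeneous polynomial inequality in $\sqrt{a}$ and $\sqrt{b}$ with coefficients depending on $A$, $G$, $r_{0}$, and $m$.

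The main obstacle is the verification of this polynomial inequality. Naive estimates such as $A^{m-1}\ge G^{m-1}$ or replacing the power $r_{0}^{m+1}$ by $r_{0}^{m-1}$ lose too much slack, and neither of the two summands on the left alone suffices---one can check that the $G^{2}$ term is needed when $\nu$ is near $1/2$ and $a$ is near $b$, while the $A^{m-1}$ term is needed when $\nu$ is small and $a/b$ is large. To close the argument, one must retain $A^{m-1}$ in its exact form and exploit the sharper lower bounds $A\ge\nu a$, $A\ge(1-\nu)b$, and $A\ge 2\sqrt{\nu(1-\nu)\,ab}$, together with $r_{0}=\min(\nu,1-\nu)$, to distribute the deficit in powers of $r_{0}$ across the two summands. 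This balancing is the technical heart of the proof.
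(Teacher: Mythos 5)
Your induction scheme is set up correctly, and the reduction is valid: stepping from $m-1$ to $m+1$ with base cases (\ref{ineq 2}) and (\ref{ineq 5}), writing $A^{m+1}=A^{m-1}A^{2}$, lower-bounding $A^{2}$ by (\ref{ineq 5}), and applying the induction hypothesis only to the $A^{m-1}G^{2}$ piece does reduce the inductive step to the residual inequality you display. But that residual inequality is where the entire difficulty lives, and you do not prove it: you describe what a proof ``must'' do and call it the technical heart. This is a genuine gap, not a routine verification. Indeed, for $\nu\leq\frac{1}{2}$ fixed, $b$ fixed and $a\to\infty$, both sides of your residual inequality equal $\nu^{m+1}a^{m+1}+O(a^{m})$ (since $r_{0}^{2}A^{m-1}(a-b)^{2}\sim\nu^{2}\cdot\nu^{m-1}a^{m-1}\cdot a^{2}$ while $r_{0}^{m+1}=\nu^{m+1}$), so the inequality is tight at leading order; the crude bounds you propose, such as $A\geq\nu a$, are exactly the ones that produce this cancellation, and whether the two left-hand summands jointly cover the remaining $O(a^{m})$ deficit requires a case analysis that the sketch does not supply. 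As written, the proposal is a plan, not a proof.

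For comparison: the paper itself does not reprove Theorem \ref{T1.1} (it quotes it from \cite{R1}), but its Section 2 derives the strictly stronger chain (\ref{ineq 16}) by a non-inductive route. Theorem \ref{T2.1} (a secant-slope inequality for increasing convex functions) applied to $\phi(x)=x^{m}$ together with the first-order inequalities (\ref{ineq 4}) yields $\left(\nu a+(1-\nu)b\right)^{m}-\left(a^{\nu}b^{1-\nu}\right)^{m}\geq r_{0}^{m}\left((a+b)^{m}-(2\sqrt{ab})^{m}\right)$, and a direct binomial expansion plus the AM--GM inequality gives $(a+b)^{m}-(2\sqrt{ab})^{m}\geq(a^{\frac{m}{2}}-b^{\frac{m}{2}})^{2}$; combining the two gives (\ref{ineq 8}) immediately. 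That route decouples the $\nu$-dependence (handled once, at first order, by (\ref{ineq 4})) from the $a,b$-dependence (handled by the binomial estimate), which is precisely the decoupling your residual inequality forces you to fight for. I would either complete the residual inequality honestly (expect it to be at least as hard as the theorem itself) or adopt the interpolation through $(a+b)^{m}-(2\sqrt{ab})^{m}$.
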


In fact, this is a generalization of the\ inequalities (\ref{ineq 2}) and (%
\ref{ineq 5}), which correspond to the cases $m=1$ and $m=2$, respectively.

The Heinz means are defined as%
\begin{equation*}
H_{\nu }(a,b)=\frac{a^{\nu }b^{1-\nu }+a^{1-\nu }b^{\nu }}{2}
\end{equation*}%
for $a,b>0$ and $0\leq \nu \leq 1$. These interesting means interpolate
between the geometric and arithmetic means. In fact, the Heinz inequalities
assert that%
\begin{equation*}
\sqrt{ab}\leq H_{\nu }(a,b)\leq \frac{a+b}{2}.
\end{equation*}%
Interchaning a and b in the inequalities (\ref{ineq 4}), and adding the
resulting inequalities to the inequalities (\ref{ineq 4}), we have

\begin{equation}
2r_{0}\left( a+b-2\sqrt{ab}\right) \leq a+b-\left( a^{\nu }b^{1-\nu
}+a^{1-\nu }b^{\nu }\right) \leq 2R_{0}\left( a+b-2\sqrt{ab}\right) .
\label{ineq 9}
\end{equation}%
Equivalently,\ \ \ \ \ \ \ \ \ \ \ \ \ \ \ \ \ \ \ \ \ \ \ \ \ \ \ \ \ \ \ \
\ \ \ \ \ \ \ \ \ \ \ \ \ \ \ \ \ \ \ \ \ \ \ \ \ \ \ \ \ \ \ \ \ \ \ \ \ \
\ \ \ \ \ \ \ \ \ \ \ \ \ \ \ \ \ \ \ \ \ \ \ \ \ \ \ \ \ \ \ \ \ \ \ \ \ \
\ \ \ \ \ \ \ \ \ \ \ \ \ \ \ \ \ \ \ \ \ \ \ \ \ \ \ \ \ \ \ \ \ \ \ \ \ \
\ \ \ \ \ \ \ \ \ \ \ \ \ \ \ \ \ \ \ \ \ \ \ \ \ \ \ \ \ \ \ \ \ \ \ \ \ \
\ \ \ \ \ \ \ \ \ \ \ \ \ \ \ \ \ \ \ \ 

\begin{equation}
2r_{0}\left( \frac{a+b}{2}-\sqrt{ab}\right) \leq \frac{a+b}{2}-H_{\nu
}(a,b)\leq 2R_{0}\left( \frac{a+b}{2}-\sqrt{ab}\right) ,  \label{ineq 10}
\end{equation}%
where $r_{0}=\min \left\{ \nu ,1-\nu \right\} $\textit{\ and }$R_{0}=\max
\left\{ \nu ,1-\nu \right\} .$

In Section 2, we present a new inequality for convex functions of real
variables. We apply this inequality to obtain considerable generalizations,
refinements, and reverses of the Young and Heinz inequalities (\ref{ineq 2}%
)-(\ref{ineq 10}). Applications to unitarily invariant norm inequalities
involving positive semidefinite matrices are given in Section 3.

\section{ Main results}

Let $f$ be a convex function defined on an interval $I$. If $x$, $y,z$ and $%
w $ are points in $I$ such that $w<z<y<x$, then it follows by a slope
argument that%
\begin{equation}
\frac{f(z)-f(w)}{z-w}\leq \frac{f(y)-f(w)}{y-w}\leq \frac{f(y)-f(z)}{y-z}%
\leq \frac{f(x)-f(y)}{x-y}  \label{ineq 11}
\end{equation}%
(see, e.g., \cite[p. 21]{R8}).

Our first result is a consequence of the inequalities (\ref{ineq 11}) for
convex functions of real variables.

\begin{theorem}
\label{T2.1}Let $\phi $ be a stricly increasing convex function defined on
an interval $I$. If $x,y,z,$ and $w$ are points in $I$ such that $\ $%
\begin{equation}
\text{ \ \ }z-w\leq x-y,  \label{ineq 12}
\end{equation}%
where $w\leq z\leq x$ and $y\leq x$, then 
\begin{equation}
(0\leq )\text{ \ \ }\phi (z)-\phi (w)\leq \phi (x)-\phi (y).  \label{ineq 13}
\end{equation}
\end{theorem}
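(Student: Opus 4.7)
My plan is to reduce the statement to the standard \emph{shift-monotonicity} of convex functions: for any convex $\phi$ and any fixed $d\geq 0$, the map $t\mapsto \phi(t+d)-\phi(t)$ is nondecreasing in $t$. This is precisely what the slope chain (\ref{ineq 11}) gives when applied to the four ordered points $t_1<t_1+d\leq t_2<t_2+d$ (the first and last slopes in (\ref{ineq 11}) are $(\phi(t_1+d)-\phi(t_1))/d$ and $(\phi(t_2+d)-\phi(t_2))/d$); if the intervals overlap, one simply splits through an intermediate point and iterates the three-point slope inequality. I will set $d:=z-w$ and $D:=x-y$, so the hypothesis (\ref{ineq 12}) reads $0\leq d\leq D$.

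Since $\phi$ is increasing and $y+d\leq y+D=x$, $x-d\geq y$, I get the two auxiliary bounds
\[
\phi(y+d)-\phi(y)\leq \phi(x)-\phi(y),\qquad \phi(x)-\phi(x-d)\leq \phi(x)-\phi(y).
\]
It therefore suffices to dominate $\phi(z)-\phi(w)=\phi(w+d)-\phi(w)$ by the left-hand side of \emph{either} of these bounds.

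Here I would split on the position of $w$ relative to $y$, which the hypothesis does not pin down. If $w\leq y$, I apply shift-monotonicity to the pair $w\leq y$ with shift $d$ to obtain $\phi(w+d)-\phi(w)\leq \phi(y+d)-\phi(y)$, and combine with the first auxiliary bound. If instead $w>y$, I use $w+d=z\leq x$, so that shift-monotonicity applied to the pair $w\leq x-d$ with the same shift $d$ gives $\phi(w+d)-\phi(w)\leq \phi(x)-\phi(x-d)$, which combines with the second auxiliary bound. Either way, $\phi(z)-\phi(w)\leq \phi(x)-\phi(y)$, and the trailing remark $0\leq \phi(z)-\phi(w)$ is immediate from $w\leq z$ and the monotonicity of $\phi$.

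The only real obstacle is that the hypothesis fixes neither the order of $w$ and $y$ nor whether the intervals $[w,z]$ and $[y,x]$ overlap, so a genuine case split is required; once the problem is re-expressed via shift-monotonicity, each case is a single line. Strict monotonicity of $\phi$ is in fact not used (ordinary monotonicity suffices); it is presumably included with later applications of Theorem~\ref{T2.1} in mind.
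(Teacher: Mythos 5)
Your proof is correct, but it takes a genuinely different route from the paper's. The paper works directly with the four given points: after disposing of the degenerate configurations ($x=y$, $y=w$, $z=w$, $x=z$), it enumerates the three possible orderings $w<y\leq z<x$, $w<z<y<x$, and $y<w<z<x$; in the first two it applies the chord-slope chain (\ref{ineq 11}) to obtain $\frac{\phi (z)-\phi (w)}{z-w}\leq \frac{\phi (x)-\phi (y)}{x-y}$ and then multiplies through using $z-w\leq x-y$ together with $\phi (x)\geq \phi (y)$, while the third ordering is handled by monotonicity alone. You instead extract shift-monotonicity of convex increments as a lemma, translate both differences to the common shift length $d=z-w$ via the auxiliary points $y+d$ and $x-d$ (both of which do lie in $I$, being trapped between $y$ and $x$), and reduce the case analysis to the single comparison of $w$ with $y$. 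Both arguments ultimately rest on (\ref{ineq 11}); yours buys a cleaner two-case structure and makes transparent that the heart of the matter is comparing equal-length increments at different positions, at the cost of introducing auxiliary points and a lemma whose overlapping-intervals case you only sketch (it in fact follows directly from comparing the first and last slopes in (\ref{ineq 11}) applied to $t_1<t_2<t_1+d<t_2+d$, since those two slopes share the denominator $t_2-t_1$). Your closing observation is also accurate: ordinary monotonicity of $\phi$ suffices for the stated non-strict conclusion, in your argument and in the paper's alike.
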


\begin{proof}
First of all, observe that if $x=y$, then $z=w$,\ and so the inequality (\ref%
{ineq 13}) becomes an equality. If $y=w$ or $z=w$,\ then the inequality (\ref%
{ineq 13}) holds. Also, if $x=z$, then according to the inequality (\ref%
{ineq 12}), we have $w\geq y$, so $\phi (w)\geq \phi (y)$, and hence $\phi
(z)-\phi (w)\leq \phi (x)-\phi (y)$.\ \ \ 

Assume that $x\neq y$, $y\neq w$, $z\neq w$, and $x\neq z$. Then we have
three cases for ordering the points $x,y,z,w$\ \ as follows:

Case 1: $w<y\leq z<x$

Case 2: $w<z<y<x$

Case 3: $y<w<z<x$.

Now,\ if $y=z$ , then the case 1 becomes $w<y=z<x$, and so by the
inequalities (\ref{ineq 11}), we have $\frac{\phi (z)-\phi (w)}{z-w}\leq 
\frac{\phi (x)-\phi (y)}{x-y}$,\ which implies that $\phi (z)-\phi (w)\leq
\phi (x)-\phi (y)$. Suppose $y\neq z$. Then\ apply the inequalities (\ref%
{ineq 11}) to the cases 1 and 2 to get the inequality (\ref{ineq 13}).\ \ \
\ \ \ \ \ \ \ \ \ \ \ \ \ \ \ \ \ \ \ \ \ \ \ \ \ \ \ \ \ \ \ \ \ \ \ \ \ \
\ \ \ \ \ \ \ \ \ \ \ \ \ \ \ \ \ \ \ \ \ \ \ \ \ \ \ \ \ \ \ \ \ \ \ \ \ \
\ \ \ \ \ \ \ \ \ \ \ \ \ \ \ \ \ \ \ \ \ \ \ \ \ \ \ \ \ \ \ \ \ \ \ \ \ \
\ \ \ \ \ \ \ \ \ \ \ \ \ \ \ \ \ \ \ \ \ \ \ \ \ \ \ \ \ \ \ \ \ \ \ \ \ \
\ \ \ \ \ \ \ \ \ \ \ \ \ \ \ \ \ \ \ \ \ \ \ \ \ \ \ \ \ \ \ \ \ \ \ \ \ \
\ \ \ \ \ \ \ \ \ \ \ \ \ \ \ \ \ \ \ \ \ \ \ \ \ \ \ \ \ \ \ \ \ \ \ \ \ \
\ \ \ \ \ \ \ \ \ \ \ \ \ \ \ \ \ \ \ \ \ \ \ \ \ \ \ \ \ \ \ \ \ \ \ \ \ \
\ \ \ \ \ \ \ \ \ \ \ \ \ \ \ \ \ \ \ \ \ \ \ \ \ \ \ \ \ \ \ \ \ \ \ \ \ \
\ \ \ \ \ \ \ \ \ \ \ \ \ \ \ \ \ \ \ \ \ \ \ \ \ \ \ \ \ \ \ \ \ \ \ \ \ \
\ \ \ \ \ \ \ \ \ \ \ \ \ \ \ \ \ \ \ \ \ \ \ \ \ \ \ \ \ \ \ \ \ \ \ \ \ \
\ \ \ \ \ \ \ \ \ \ \ \ \ \ \ \ \ \ \ \ \ \ \ \ \ \ \ \ \ \ \ \ \ \ \ \ \ \
\ \ \ \ \ \ \ \ \ \ \ \ \ \ \ \ \ \ \ \ \ \ \ \ \ \ \ \ \ \ \ \ 

To discuss the third case, we apply the strictly increasing property of the
function $\phi $ to the sequence of points $y<w<z<x$, so we have $\phi
(y)<\phi (w)<\phi (z)<\phi (x)$ and this implies that $\phi (y)-\phi
(w)<0<\phi (x)-\phi (z)$, and hence $\phi (z)-\phi (w)<\phi (x)-\phi (y)$.

Thus, from the discussion above we have\ \ \ \ \ \ \ \ \ \ \ \ \ \ \ \ \ \ \
\ \ \ \ \ \ \ \ \ \ \ \ \ \ \ \ \ \ \ \ \ \ \ \ \ \ \ \ \ \ \ \ \ \ \ \ \ \
\ \ \ \ \ \ \ \ \ \ \ \ \ \ \ \ \ \ \ \ \ \ \ \ \ \ \ \ \ \ \ \ \ \ \ \ \ \
\ \ \ \ \ \ \ \ \ \ \ \ \ \ \ \ \ \ \ \ \ \ \ \ \ \ \ \ \ \ \ \ \ \ \ \ \ \
\ \ \ \ \ \ \ \ \ \ \ \ \ \ \ \ \ \ \ \ \ \ \ \ \ \ \ \ \ \ \ \ \ \ \ \ \ \
\ \ \ \ \ \ \ \ \ \ \ \ \ \ \ \ \ \ \ \ \ 
\begin{equation*}
\phi (z)-\phi (w)\leq \phi (x)-\phi (y).
\end{equation*}%
This completes the proof.
\end{proof}

As a direct consequence of Theorem \ref{T2.1}, we have the following
generalizations of the inequalities (\ref{ineq 4}).

\begin{corollary}
\label{C2.1} Let $\phi :[0,\infty )\rightarrow 
\mathbb{R}
$ be a strictly increasing convex function. If $a,b>0$\ and $\ 0\leq \nu
\leq 1$, then we have%
\begin{eqnarray}
\phi \left( r_{0}\left( a+b\right) \right) -\phi \left( 2r_{0}\sqrt{ab}%
\right) &\leq &\phi \left( \nu a+(1-\nu )b\right) -\phi \left( a^{\nu
}b^{1-\nu }\right)  \notag \\
&\leq &\phi \left( R_{0}\left( a+b\right) \right) -\phi \left( 2R_{0}\sqrt{ab%
}\right) ,  \label{ineq 14}
\end{eqnarray}%
where $r_{0}=\min \left\{ \nu ,1-\nu \right\} $\textit{\ and }$R_{0}=\max
\left\{ \nu ,1-\nu \right\} $.
\end{corollary}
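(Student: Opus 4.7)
The plan is to apply Theorem \ref{T2.1} twice, once for each half of the two-sided inequality (\ref{ineq 14}), and to use the already-established Young-type inequalities (\ref{ineq 4}) to supply the key hypothesis (\ref{ineq 12}).

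For the left-hand inequality in (\ref{ineq 14}), I would set
\begin{equation*}
w = 2r_{0}\sqrt{ab}, \quad z = r_{0}(a+b), \quad y = a^{\nu}b^{1-\nu}, \quad x = \nu a + (1-\nu)b.
\end{equation*}
The hypothesis $z - w \leq x - y$ then reads $r_{0}\bigl((a+b) - 2\sqrt{ab}\bigr) \leq \nu a + (1-\nu)b - a^{\nu}b^{1-\nu}$, which is precisely the left inequality in (\ref{ineq 4}). The ordering conditions $w \le z \le x$ and $y \le x$ are easy to check: $w \le z$ follows from the AM-GM inequality $2\sqrt{ab} \le a+b$; $z \le x$ follows because $r_{0} = \min\{\nu,1-\nu\}$ so $r_{0}(a+b) = r_{0}a + r_{0}b \le \nu a + (1-\nu)b$; and $y \le x$ is Young's inequality (\ref{ineq 1}). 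Theorem \ref{T2.1} then yields $\phi(z) - \phi(w) \le \phi(x) - \phi(y)$, which is the desired left inequality.

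For the right-hand inequality in (\ref{ineq 14}), I would set
\begin{equation*}
w = a^{\nu}b^{1-\nu}, \quad z = \nu a + (1-\nu)b, \quad y = 2R_{0}\sqrt{ab}, \quad x = R_{0}(a+b).
\end{equation*}
Now $z - w \le x - y$ reduces to $\nu a + (1-\nu)b - a^{\nu}b^{1-\nu} \le R_{0}\bigl((a+b) - 2\sqrt{ab}\bigr)$, which is the right inequality in (\ref{ineq 4}). The ordering $w \le z$ is Young's inequality, $z \le x$ follows since $R_{0} = \max\{\nu, 1-\nu\}$ dominates both $\nu$ and $1-\nu$, and $y \le x$ is AM-GM. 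A second application of Theorem \ref{T2.1} produces the right inequality.

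There is no real obstacle here; the heart of the argument is the observation that the classical refinement and reverse of Young's inequality in (\ref{ineq 4}) are exactly of the form required by the slope-comparison hypothesis (\ref{ineq 12}) of Theorem \ref{T2.1}. The only thing to be careful about is matching up the four points correctly in each case and verifying that every quantity lies in the domain $[0,\infty)$ on which $\phi$ is defined, which is immediate since $a,b>0$ and $r_{0},R_{0}\in[0,1]$.
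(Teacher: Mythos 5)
Your proof is correct and follows essentially the same route as the paper: both apply Theorem \ref{T2.1} twice, using the two inequalities in (\ref{ineq 4}) to supply hypothesis (\ref{ineq 12}) and AM--GM together with $r_{0}\leq\nu,1-\nu\leq R_{0}$ to verify the ordering conditions. The only cosmetic difference is that the paper writes both applications at once with points $z,w,z',w'$ rather than relabeling the four points for each half.
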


\begin{proof}
Let $x=\nu a+(1-\nu )b$, \ \ $y=a^{\nu }b^{1-\nu },$ \ \ $z=r_{0}\left(
a+b\right) $, $w=2r_{0}\sqrt{ab}$\ , $z^{\prime }=R_{0}\left( a+b\right) $,
and $w^{\prime }=2R_{0}\sqrt{ab}$. Then based on the inequalities (\ref{ineq
4}) and the arithmetic-geometric mean inequality, we have 
\begin{equation*}
z-w\leq x-y\leq z^{\prime }-w^{\prime }.
\end{equation*}%
The first and the second inequalities in (\ref{ineq 14}) follow directly by
applying Theorem \ref{T2.1} to the inequalities $z-w\leq x-y$, with $w\leq
z\leq x$, $y\leq x$ and $x-y\leq z^{\prime }-w^{\prime }$ with $y\leq x\leq
z^{\prime }$, $w^{\prime }\leq z^{\prime }$, respectively. This completes
the proof.
\end{proof}

A particular case of Corollary \ref{C2.1}, which is obtained by taking $\phi
(x)=x^{p}$ $(p\in 
\mathbb{R}
$, $p\geq 1)$ can be stated as follows.

\begin{corollary}
\label{C2.2}If $a,b>0$\ and $\ 0\leq \nu \leq 1$, then for $p\in 
\mathbb{R}
$, $p\geq 1$, we have%
\begin{eqnarray}
r_{0}^{p}\left( (a+b)^{p}-\left( 2\sqrt{ab}\right) ^{p}\right) &\leq &(\nu
a+(1-\nu )b)^{p}-\left( a^{\nu }b^{1-\nu }\right) ^{p}  \notag \\
&\leq &R_{0}^{p}\ \left( (a+b)^{p}-\left( 2\sqrt{ab}\right) ^{p}\right) ,
\label{ineq 15}
\end{eqnarray}%
where $r_{0}=\min \left\{ \nu ,1-\nu \right\} $\textit{\ and }$R_{0}=\max
\left\{ \nu ,1-\nu \right\} $. \ \ \ \ \ \ \ \ \ \ \ \ \ \ \ \ \ \ \ \ \ \ \
\ \ \ \ \ \ \ \ \ \ \ \ \ \ \ \ \ \ \ \ \ \ \ \ \ \ \ \ \ \ \ \ \ \ \ \ \ \
\ \ \ \ \ \ \ \ \ \ \ \ \ \ \ \ \ \ \ \ \ \ \ \ \ \ \ \ \ \ \ \ \ \ \ \ \ \
\ \ \ \ \ \ \ \ \ \ \ \ \ \ \ \ \ \ \ \ \ \ \ \ \ \ \ \ \ \ \ \ \ \ \ \ \ \
\ \ \ \ \ \ \ \ \ \ \ \ \ \ \ \ \ \ \ \ \ \ \ \ \ \ \ \ \ \ \ \ \ \ \ \ \ \
\ \ \ \ \ \ \ \ \ \ \ \ \ \ \ \ \ \ \ \ \ \ \ \ \ \ \ \ \ \ \ \ \ \ \ \ \ \
\ 
\end{corollary}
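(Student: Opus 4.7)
The plan is to obtain Corollary \ref{C2.2} as a direct specialization of Corollary \ref{C2.1} by choosing $\phi(x)=x^p$. First I would verify that this $\phi$ meets the hypotheses of Corollary \ref{C2.1}, namely that it is strictly increasing and convex on $[0,\infty)$. For $p\geq 1$ this is standard: the derivative $\phi'(x)=px^{p-1}$ is positive on $(0,\infty)$, so $\phi$ is strictly increasing on $[0,\infty)$, and the second derivative $p(p-1)x^{p-2}$ is nonnegative on $(0,\infty)$, giving convexity (the boundary case $p=1$ yields a linear, hence convex, function).

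Next I would substitute $\phi(x)=x^p$ into the inequalities \eqref{ineq 14}. The key simplification is the homogeneity of this choice: $\phi(cx)=c^p\phi(x)$, so
\begin{equation*}
\phi(r_0(a+b))-\phi(2r_0\sqrt{ab})=r_0^p\bigl((a+b)^p-(2\sqrt{ab})^p\bigr),
\end{equation*}
and similarly the upper envelope factors as $R_0^p\bigl((a+b)^p-(2\sqrt{ab})^p\bigr)$. The middle term of \eqref{ineq 14} becomes $(\nu a+(1-\nu)b)^p-(a^\nu b^{1-\nu})^p$, which is exactly the middle term of \eqref{ineq 15}.

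There is really no obstacle here; the whole content of the corollary is already carried by Corollary \ref{C2.1}, and the only arithmetic step is pulling the constants $r_0^p$ and $R_0^p$ out of the power via the homogeneity of $x\mapsto x^p$. Worth remarking in the write-up is that the case $p=1$ recovers \eqref{ineq 4} while $p=2$ recovers \eqref{ineq 7}, so Corollary \ref{C2.2} unifies and extends these previously known refinements and reverses of Young's inequality.
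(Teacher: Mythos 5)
Your proposal is correct and matches the paper's own treatment: the paper obtains Corollary \ref{C2.2} precisely by taking $\phi(x)=x^{p}$ $(p\geq 1)$ in Corollary \ref{C2.1}, relying implicitly on the homogeneity $\phi(cx)=c^{p}\phi(x)$ that you make explicit. Your verification of the monotonicity and convexity hypotheses and the remark on the cases $p=1$ and $p=2$ are consistent with the paper's discussion.
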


One can observe that the inequalities (\ref{ineq 15}) are reduced to the
inequalities (\ref{ineq 4}) and (\ref{ineq 7}) when $p=1$ and $p=2$,
respectively.

The next theorem demonstrates the relationship between\ the inequalities (%
\ref{ineq 15}) and (\ref{ineq 8}). In fact, it confirms that the first
inequality in (\ref{ineq 15}) is uniformly better than the inequality (\ref%
{ineq 8}), and the second inequality in (\ref{ineq 15}) provides a reverse
of the same inequality. It should be noted here that the variable $p$ \ in
the inequalities (\ref{ineq 15}) is continuous $(p\in 
\mathbb{R}
$, $p\geq 1)$, while the variable $m$ in the inequalitiy (\ref{ineq 8}) is
discrete $(m=1,2,3,...)$.

\begin{theorem}
\label{T2.2}If $a,b>0$\ and $\ 0\leq \nu \leq 1$, then we have%
\begin{eqnarray}
r_{0}^{m}\left( a^{\frac{m}{2}}-b^{\frac{m}{2}}\right) ^{2} &\leq
&r_{0}^{m}\left( \left( a+b\right) ^{m}-\left( 2\sqrt{ab}\right) ^{m}\right)
\notag \\
&\leq &\left( \nu a+(1-\nu )b\right) ^{m}-\left( a^{\nu }b^{1-\nu }\right)
^{m}  \notag \\
\text{ \ \ \ \ \ \ \ \ \ \ \ \ \ \ \ \ \ \ \ \ \ \ \ \ \ \ \ \ \ \ \ \ \ \ \
\ \ \ \ \ \ \ \ \ \ } &\leq &R_{0}^{m}\left( \left( a+b\right) ^{m}-\left( 2%
\sqrt{ab}\right) ^{m}\right)  \label{ineq 16}
\end{eqnarray}%
for $m=1,2,3,...$, where $r_{0}=\min \left\{ \nu ,1-\nu \right\} $.
\end{theorem}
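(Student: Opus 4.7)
The chain consists of three inequalities, which I would treat separately. The middle inequality $r_0^m((a+b)^m - (2\sqrt{ab})^m) \leq (\nu a+(1-\nu)b)^m - (a^\nu b^{1-\nu})^m$ and the rightmost inequality $(\nu a+(1-\nu)b)^m - (a^\nu b^{1-\nu})^m \leq R_0^m((a+b)^m - (2\sqrt{ab})^m)$ are precisely the two bounds supplied by Corollary \ref{C2.2} with the choice $p = m$; since every positive integer satisfies $p \geq 1$, no further work is needed for these two.

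The substantive new content is the leftmost inequality $r_0^m(a^{m/2} - b^{m/2})^2 \leq r_0^m((a+b)^m - (2\sqrt{ab})^m)$. Since $r_0 \geq 0$ (with the case $r_0 = 0$ trivial), it suffices to prove the unweighted statement
\begin{equation*}
(a^{m/2} - b^{m/2})^2 \leq (a+b)^m - (2\sqrt{ab})^m.
\end{equation*}
Expanding the left-hand side as $a^m + b^m - 2(ab)^{m/2}$ and the right-hand side by the binomial theorem, this reduces to
\begin{equation*}
\sum_{k=1}^{m-1} \binom{m}{k} a^k b^{m-k} \geq (2^m - 2)(ab)^{m/2}.
\end{equation*}

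I would establish this by pairing the $k$-th and $(m-k)$-th terms of the sum (for $1 \leq k < m/2$) and applying AM-GM:
\begin{equation*}
\binom{m}{k}\bigl(a^k b^{m-k} + a^{m-k} b^k\bigr) \geq 2\binom{m}{k}\sqrt{a^m b^m} = 2\binom{m}{k}(ab)^{m/2}.
\end{equation*}
When $m$ is even, the central term $\binom{m}{m/2}(ab)^{m/2}$ is already in the desired form and contributes its full coefficient. Summing over all such pairs (and the possible middle term) and invoking $\sum_{k=1}^{m-1}\binom{m}{k} = 2^m - 2$ yields the required lower bound.

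I do not anticipate a genuine obstacle: once one recognizes that the right-hand inequalities are a direct specialization of Corollary \ref{C2.2}, the remaining inequality is a symmetric binomial expansion handled by pairwise AM-GM. An alternative, slightly slicker route would be the weighted AM-GM inequality applied with weights $\binom{m}{k}/(2^m-2)$, where the identity $\sum_{k=1}^{m-1} k\binom{m}{k} = m(2^{m-1}-1)$ delivers exponent $m/2$ on both $a$ and $b$ in the resulting geometric mean.
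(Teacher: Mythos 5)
Your proposal is correct and follows essentially the same route as the paper: the second and third inequalities are obtained as the case $p=m$ of Corollary \ref{C2.2}, and the first inequality is reduced to $(a+b)^m-(2\sqrt{ab})^m\geq a^m+b^m-2(ab)^{m/2}$ via the binomial expansion, pairwise AM--GM on the symmetric terms $a^kb^{m-k}+a^{m-k}b^k$, and the identity $\sum_{k=1}^{m-1}\binom{m}{k}=2^m-2$, exactly as in the paper's even/odd case analysis.
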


\begin{proof}
It is clear that the second and third inequalities in (\ref{ineq 16}) are
special cases of the inequalities (\ref{ineq 15}). So, it is enough to prove
the first inequality in (\ref{ineq 16}).

To do this, observe that the cases $m=1$ and $m=2$ degenerate to an
equality. For $m=3,4,5,...$, we discuss two cases.

Case 1: If $m$ is even, we have%
\begin{eqnarray*}
(a+b)^{m} &=&a^{m}+\sum_{i=1}^{\frac{m}{2}-1}\left( 
\begin{array}{c}
m \\ 
i%
\end{array}%
\right) \left( a^{i}b^{m-i}+a^{m-i}b^{i}\right) +\left( 
\begin{array}{c}
m \\ 
\frac{m}{2}%
\end{array}%
\right) a^{\frac{m}{2}}b^{\frac{m}{2}}+b^{m} \\
&\geq &a^{m}+2a^{\frac{m}{2}}b^{\frac{m}{2}}\sum_{i=1}^{\frac{m}{2}-1}\left( 
\begin{array}{c}
m \\ 
i%
\end{array}%
\right) +\left( 
\begin{array}{c}
m \\ 
\frac{m}{2}%
\end{array}%
\right) a^{\frac{m}{2}}b^{\frac{m}{2}}+b^{m}\text{ \ } \\
&&\text{ \ \ \ \ \ \ \ \ \ \ \ \ \ \ \ \ \ \ \ \ \ \ \ \ \ (by the
arithmetic-geometric mean inequality)} \\
&=&a^{m}+a^{\frac{m}{2}}b^{\frac{m}{2}}\left( 2\sum_{i=1}^{\frac{m}{2}%
-1}\left( 
\begin{array}{c}
m \\ 
i%
\end{array}%
\right) +\left( 
\begin{array}{c}
m \\ 
\frac{m}{2}%
\end{array}%
\right) \right) +b^{m} \\
&=&a^{m}+b^{m}+a^{\frac{m}{2}}b^{\frac{m}{2}}\sum_{i=1}^{m-1}\left( 
\begin{array}{c}
m \\ 
i%
\end{array}%
\right) \\
&=&a^{m}+b^{m}+\left( 2^{m}-2\right) a^{\frac{m}{2}}b^{\frac{m}{2}}.
\end{eqnarray*}

Case 2: If $m$ is odd, we have%
\begin{eqnarray*}
(a+b)^{m} &=&a^{m}+\sum_{i=1}^{\frac{m-1}{2}}\left( 
\begin{array}{c}
m \\ 
i%
\end{array}%
\right) \left( a^{i}b^{m-i}+a^{m-i}b^{i}\right) +b^{m} \\
&\geq &a^{m}+b^{m}+2a^{\frac{m}{2}}b^{\frac{m}{2}}\sum_{i=1}^{\frac{m-1}{2}%
}\left( 
\begin{array}{c}
m \\ 
i%
\end{array}%
\right) \text{ } \\
&&\text{ \ \ \ \ \ \ \ \ \ \ \ \ \ \ \ \ \ (by the arithmetic-geometric mean
inequality)} \\
&=&a^{m}+b^{m}+a^{\frac{m}{2}}b^{\frac{m}{2}}\sum_{i=1}^{m-1}\left( 
\begin{array}{c}
m \\ 
i%
\end{array}%
\right) \\
&=&a^{m}+b^{m}+\left( 2^{m}-2\right) a^{\frac{m}{2}}b^{\frac{m}{2}}.
\end{eqnarray*}%
Now,

for $m=3,4,5,...,$we have%
\begin{eqnarray*}
\left( a+b\right) ^{m}-\left( 2\sqrt{ab}\right) ^{m} &=&\left( a+b\right)
^{m}-2^{m}a^{\frac{m}{2}}b^{\frac{m}{2}} \\
&\geq &a^{m}+b^{m}-2a^{\frac{m}{2}}b^{\frac{m}{2}} \\
&=&\left( a^{\frac{m}{2}}-b^{\frac{m}{2}}\right) ^{2}.
\end{eqnarray*}%
Thus, 
\begin{equation*}
r_{0}^{m}\left( a^{\frac{m}{2}}-b^{\frac{m}{2}}\right) ^{2}\leq
r_{0}^{m}\left( \left( a+b\right) ^{m}-\left( 2\sqrt{ab}\right) ^{m}\right)
\leq \left( \nu a+(1-\nu )b\right) ^{m}-\left( a^{\nu }b^{1-\nu }\right) ^{m}
\end{equation*}%
for $m=1,2,3,...$ This completes the proof.\ \ \ \ \ \ \ \ \ \ \ \ \ \ \ \ \
\ \ \ \ \ \ \ \ \ \ \ \ \ \ \ \ \ \ \ \ \ \ \ \ \ \ \ \ \ \ \ \ \ \ \ \ \ \
\ \ \ \ \ \ \ \ \ \ \ \ \ \ \ \ \ \ \ \ \ \ \ \ \ \ \ \ \ \ \ \ \ \ \ \ \ \
\ \ \ \ \ \ \ \ \ \ \ \ \ \ \ \ \ \ \ \ \ \ \ \ \ \ \ \ \ \ \ \ \ \ \ \ \ \
\ \ \ \ \ \ \ \ \ \ \ \ \ \ \ \ \ \ \ \ \ \ \ \ \ \ \ \ \ \ \ \ \ \ \ \ \ \
\ \ 
\end{proof}

Applying Theorem \ref{T2.1} again, we have the following generalizations of
the inequalities (\ref{ineq 9}) and (\ref{ineq 10}), respectively:%
\begin{eqnarray}
\phi \left( 2r_{0}\left( a+b\right) \right) -\phi \left( 4r_{0}\sqrt{ab}%
\right) &\leq &\phi \left( a+b\right) -\phi \left( a^{\nu }b^{1-\nu
}+a^{1-\nu }b^{\nu }\right)  \notag \\
&\leq &\phi \left( 2R_{0}\left( a+b\right) \right) -\phi \left( 4R_{0}\sqrt{%
ab}\right)  \label{ineq 17}
\end{eqnarray}%
and%
\begin{eqnarray*}
\phi \left( 2r_{0}\left( \frac{a+b}{2}\right) \right) -\phi \left( 2r_{0}%
\sqrt{ab}\right) &\leq &\phi \left( \frac{a+b}{2}\right) -\phi \left( H_{\nu
}(a,b)\right) \\
&\leq &\phi \left( 2R_{0}\left( \frac{a+b}{2}\right) \right) -\phi \left(
2R_{0}\sqrt{ab}\right) .\text{ \ \ \ \ \ \ \ }
\end{eqnarray*}%
In particular, if $\phi (x)=x^{p}$ $(p\in 
\mathbb{R}
,$ $p\geq 1),$we have%
\begin{eqnarray}
(2r_{0})^{p}\left( \left( a+b\right) ^{p}-\left( 2\sqrt{ab}\right)
^{p}\right) &\leq &\left( a+b\right) ^{p}-\left( a^{\nu }b^{1-\nu }+a^{1-\nu
}b^{\nu }\right) ^{p}  \notag \\
&\leq &\text{\ }(2R_{0})^{p}\left( \left( a+b\right) ^{p}-\left( 2\sqrt{ab}%
\right) ^{p}\right) .  \label{ineq 18}
\end{eqnarray}

\section{\protect\bigskip Some inequalities for unutarily invariant norms}

In this section, we obtain matrix versions of the scalar inequalities
presented in Sections 1 and 2.

Let $M_{n}(%
\mathbb{C}
)$ be the space of $n\times n$ complex matrices. A norm $\left\vert
\left\vert \left\vert .\right\vert \right\vert \right\vert $ on $M_{n}(%
\mathbb{C}
)$ is called unitarily invariant if $\left\vert \left\vert \left\vert
UAV\right\vert \right\vert \right\vert =\left\vert \left\vert \left\vert
A\right\vert \right\vert \right\vert $ for all $A\in M_{n}(%
\mathbb{C}
)$ and for all unitary matrices $U,V$ $\in M_{n}(%
\mathbb{C}
)$. An example of unitarly invariant norms is the Schatten $p$-norm, denoted
by $\left\Vert .\right\Vert _{p}$ and defined, for $1\leq p<\infty $, by%
\begin{equation*}
\left\Vert A\right\Vert _{p}=\left( \sum_{i=1}^{n}s_{i}^{p}(A)\right) ^{%
\frac{1}{p}},
\end{equation*}%
where $s_{1}(A)\geq s_{2}(A)\geq ...\geq s_{n}(A)$ are the singular values
of $A\in M_{n}(%
\mathbb{C}
)$. The Schatten $1$-norm of $A$ is the trace norm, which can be expressed
as $\left\Vert A\right\Vert _{1}=$ tr $\left\vert A\right\vert $. The
Schatten $2-$norm of $A=\left[ a_{ij}\right] $ is known as the
Hilbert-Schmidt (or the Frobenius) norm, which can be expressed as%
\begin{equation*}
\left\Vert A\right\Vert _{2}=\left( \sum_{i,j=1}^{n}\left\vert
a_{ij}\right\vert ^{2}\right) ^{\frac{1}{2}}.
\end{equation*}%
Another important example of unitarily invariant norms on $M_{n}(%
\mathbb{C}
)$ is the spectral (or the usual operator) norm $\left\Vert .\right\Vert $,
given by%
\begin{equation*}
\left\Vert A\right\Vert =s_{1}(A).
\end{equation*}

Based on the refined and reversed Young inequalities (\ref{ineq 5}) and (\ref%
{ineq 6}), Hirzallah and Kittaneh \cite{R4}, and Kittaneh and Manasrah \cite%
{R7}, respectively, proved that if $A,B,X\in M_{n}(%
\mathbb{C}
)$ such that $A$ and $B$ are positive semidefinite, then 
\begin{equation}
r_{0}^{2}\left\Vert AX-XB\right\Vert _{2}^{2}\leq \left\Vert \nu AX+(1-\nu
)XB\right\Vert _{2}^{2}-\left\Vert A^{\nu }XB^{1-\nu }\right\Vert _{2}^{2}
\label{ineq 19}
\end{equation}%
and%
\begin{equation}
\left\Vert \nu AX+(1-\nu )XB\right\Vert _{2}^{2}-\left\Vert A^{\nu
}XB^{1-\nu }\right\Vert _{2}^{2}\leq R_{0}^{2}\left\Vert AX-XB\right\Vert
_{2}^{2},  \label{ineq 20}
\end{equation}%
where $0\leq \nu \leq 1$, $r_{0}=\min \left\{ \nu ,1-\nu \right\} $, \textit{%
and }$R_{0}=\max \left\{ \nu ,1-\nu \right\} .$

It can be easily shown that%
\begin{equation*}
\left\Vert AX-XB\right\Vert _{2}^{2}=\left\Vert AX+XB\right\Vert
_{2}^{2}-4\left\Vert A^{\frac{1}{2}}XB^{\frac{1}{2}}\right\Vert _{2}^{2}.
\end{equation*}%
Thus, the inequalities (\ref{ineq 19}) and (\ref{ineq 20}) can be combined
and expressed so that%
\begin{eqnarray}
r_{0}^{2}\left( \left\Vert AX+XB\right\Vert _{2}^{2}-4\left\Vert A^{\frac{1}{%
2}}XB^{\frac{1}{2}}\right\Vert _{2}^{2}\right) &\leq &\left\Vert \nu
AX+(1-\nu )XB\right\Vert _{2}^{2}-\left\Vert A^{\nu }XB^{\nu }\right\Vert
_{2}^{2}  \notag \\
&\leq &R_{0}^{2}\left( \left\Vert AX+XB\right\Vert _{2}^{2}-4\left\Vert A^{%
\frac{1}{2}}XB^{\frac{1}{2}}\right\Vert _{2}^{2}\right) .  \notag \\
&&  \label{ineq 21}
\end{eqnarray}

Applying Theorem \ref{T2.1} to the inequalities (\ref{ineq 21}), the
following general result holds.

\begin{theorem}
\label{T3.1}Let $A,B,X\in M_{n}(%
\mathbb{C}
)$\ such that $A$\ and $B$\ are positive semidefinite. If $\ \phi :[0,\infty
)\rightarrow 
\mathbb{R}
$ is a strictly increasing convex function, then we have%
\begin{eqnarray*}
&&\phi \left( r_{0}^{2}\left\Vert AX+XB\right\Vert _{2}^{2}\right) -\phi
\left( 4r_{0}^{2}\left\Vert A^{\frac{1}{2}}XB^{\frac{1}{2}}\right\Vert
_{2}^{2}\right) \\
&\leq &\phi \left( \left\Vert \nu AX+(1-\nu )XB\right\Vert _{2}^{2}\right)
-\phi \left( \left\Vert A^{\nu }XB^{\nu }\right\Vert _{2}^{2}\right) \\
&\leq &\phi \left( R_{0}^{2}\left\Vert AX+XB\right\Vert _{2}^{2}\right)
-\phi \left( 4R_{0}^{2}\left\Vert A^{\frac{1}{2}}XB^{\frac{1}{2}}\right\Vert
_{2}^{2}\right) ,
\end{eqnarray*}%
where $r_{0}=\min \left\{ \nu ,1-\nu \right\} $\textit{\ and }$R_{0}=\max
\left\{ \nu ,1-\nu \right\} $.
\end{theorem}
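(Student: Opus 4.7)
The plan is to interpret the Hilbert--Schmidt inequalities (\ref{ineq 21}) as an instance of the hypothesis $z - w \le x - y \le z' - w'$ of Theorem \ref{T2.1} and then invoke that theorem once on each half, feeding in the given strictly increasing convex $\phi$.

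Concretely, I would set $x = \|\nu AX + (1-\nu) XB\|_2^2$, $y = \|A^{\nu} X B^{1-\nu}\|_2^2$, $z = r_0^2 \|AX + XB\|_2^2$, $w = 4 r_0^2 \|A^{1/2} X B^{1/2}\|_2^2$, $z' = R_0^2 \|AX + XB\|_2^2$ and $w' = 4 R_0^2 \|A^{1/2} X B^{1/2}\|_2^2$; all are nonnegative scalars, so they sit in the domain of $\phi$. Then (\ref{ineq 21}) reads exactly $z - w \le x - y \le z' - w'$, and the two assertions of Theorem \ref{T3.1} become $\phi(z) - \phi(w) \le \phi(x) - \phi(y)$ and $\phi(x) - \phi(y) \le \phi(z') - \phi(w')$.

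To invoke Theorem \ref{T2.1} on the left half I need $w \le z \le x$ together with $y \le x$; on the right half, after relabeling, I need $y \le x \le z'$ together with $w' \le z'$. Three of these conditions are essentially free. The inequalities $w \le z$ and $w' \le z'$ both reduce to $4 \|A^{1/2} X B^{1/2}\|_2^2 \le \|AX + XB\|_2^2$, which is immediate from the identity $\|AX - XB\|_2^2 = \|AX + XB\|_2^2 - 4 \|A^{1/2} X B^{1/2}\|_2^2$ quoted just before (\ref{ineq 21}). The inequality $y \le x$ is exactly the refined Young norm inequality (\ref{ineq 19}) after discarding the nonnegative $r_0^2 \|AX - XB\|_2^2$ term.

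The one genuinely nontrivial ordering is the sandwich $z \le x \le z'$, i.e.\ $r_0 \|AX + XB\|_2 \le \|\nu AX + (1-\nu) XB\|_2 \le R_0 \|AX + XB\|_2$. For this I would spectrally decompose $A = U D_A U^*$ and $B = V D_B V^*$ with $D_A = \mathrm{diag}(\lambda_i) \ge 0$ and $D_B = \mathrm{diag}(\mu_j) \ge 0$, and set $Y = [y_{ij}] = U^* X V$. Unitary invariance of $\|\cdot\|_2$ then gives $\|AX + XB\|_2^2 = \sum_{i,j}(\lambda_i + \mu_j)^2 |y_{ij}|^2$ and $\|\nu AX + (1-\nu)XB\|_2^2 = \sum_{i,j}(\nu \lambda_i + (1-\nu)\mu_j)^2 |y_{ij}|^2$, so the desired sandwich reduces termwise to the scalar chain $r_0(\lambda + \mu) \le \nu \lambda + (1-\nu)\mu \le R_0(\lambda + \mu)$ for $\lambda, \mu \ge 0$, which is obvious after splitting on $\nu \le 1/2$ versus $\nu \ge 1/2$. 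With every order condition in hand, two direct applications of Theorem \ref{T2.1} to the two halves of (\ref{ineq 21}) yield the statement. The only step with any content is this spectral sandwich, and it is precisely what pins Theorem \ref{T3.1} to $\|\cdot\|_2$: the termwise eigenvalue representation is unavailable for a general unitarily invariant norm.
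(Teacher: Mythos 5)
Your proposal is correct and follows exactly the paper's route: the paper offers no written proof beyond the remark that Theorem \ref{T3.1} follows by ``applying Theorem \ref{T2.1} to the inequalities (\ref{ineq 21})''. In fact you supply more than the paper does, since you explicitly verify the ordering hypotheses $w\leq z\leq x$, $y\leq x$ and $y\leq x\leq z^{\prime }$, $w^{\prime }\leq z^{\prime }$ (the sandwich $r_{0}\left\Vert AX+XB\right\Vert _{2}\leq \left\Vert \nu AX+(1-\nu )XB\right\Vert _{2}\leq R_{0}\left\Vert AX+XB\right\Vert _{2}$ via spectral decomposition being the one nontrivial check), which the paper leaves implicit.
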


In particular, when $\phi (x)=x^{\frac{p}{2}}$ $(p\in 
\mathbb{R}
$ and $p\geq 2)$, we have%
\begin{eqnarray*}
r_{0}^{p}\left( \left\Vert AX+XB\right\Vert _{2}^{p}-2^{p}\left\Vert A^{%
\frac{1}{2}}XB^{\frac{1}{2}}\right\Vert _{2}^{p}\right) &\leq &\left\Vert
\nu AX+(1-\nu )XB\right\Vert _{2}^{p}-\left\Vert A^{\nu }XB^{\nu
}\right\Vert _{2}^{p} \\
&\leq &R_{0}^{p}\left( \left\Vert AX+XB\right\Vert _{2}^{p}-2^{p}\left\Vert
A^{\frac{1}{2}}XB^{\frac{1}{2}}\right\Vert _{2}^{p}\right) .
\end{eqnarray*}%
\ 

If $A,B,X\in M_{n}(%
\mathbb{C}
)$ such that $A$ and $B$ are positive semidefinite, then it is known that
for any unitarily invariant norm, the function $f(\nu )=\left\vert
\left\vert \left\vert A^{\nu }XB^{1-\nu }+A^{1-\nu }XB^{\nu }\right\vert
\right\vert \right\vert $ is convex on $[0,1]$ and attains its minimum at $%
\nu =\frac{1}{2}$ (see, e.g., \cite[p. 265]{R2}). \ \ \ \ \ \ \ \ \ \ \ \ \
\ \ \ \ \ \ \ \ \ \ \ \ \ \ \ \ \ \ \ \ \ \ \ \ \ \ \ \ \ \ \ \ \ \ \ \ \ \
\ \ \ \ \ \ \ \ \ \ \ \ \ \ \ \ \ \ \ \ \ \ \ \ \ \ \ \ \ \ \ \ \ \ \ \ \ \
\ \ \ \ \ \ \ \ \ \ \ \ \ \ \ \ \ \ \ \ \ \ \ \ \ \ \ \ \ \ \ \ \ \ \ \ \ \
\ \ \ \ \ \ \ \ \ \ \ \ \ \ \ \ \ \ \ \ \ \ \ \ \ \ \ \ \ \ \ \ \ \ \ \ \ \
\ \ \ \ \ \ \ \ \ \ \ \ \ \ \ \ \ \ \ \ \ \ \ \ \ \ \ \ \ \ \ \ \ \ \ \ \ \
\ \ \ \ \ \ \ \ \ \ \ \ \ \ \ \ \ \ \ \ \ \ \ \ \ \ \ 

Bhatia and Davis \cite{R3} proved that if $A,B,X\in M_{n}(%
\mathbb{C}
)$ such that $A$ and $B$ are positive semidefinite, then\ \ \ \ \ \ \ \ \ \
\ \ \ \ \ \ \ \ \ \ \ \ \ \ \ \ \ \ \ \ \ \ \ \ \ \ \ \ \ \ \ \ \ \ \ \ \ \
\ \ \ \ \ \ \ \ \ \ \ \ \ \ \ \ \ \ \ \ \ \ \ \ \ \ \ \ \ \ \ \ \ \ \ \ \ \
\ \ \ \ \ \ \ \ \ \ \ \ \ \ \ \ \ \ \ \ \ \ \ \ \ \ \ \ \ \ \ \ \ \ \ \ \ \
\ \ \ \ \ \ \ \ \ \ \ \ \ \ \ \ \ \ \ \ \ \ \ \ \ \ \ \ \ \ \ \ \ \ \ \ \ \
\ \ \ \ \ \ \ \ \ \ \ \ \ \ \ \ \ \ \ \ \ \ \ \ \ \ \ \ \ \ \ \ \ \ \ \ \ \
\ \ \ \ \ \ \ \ \ \ \ \ \ \ \ \ \ \ \ \ \ \ \ \ \ \ \ \ \ \ \ \ \ \ \ \ \ \
\ \ \ \ \ \ \ \ \ \ \ \ \ \ \ \ \ \ \ \ \ \ 
\begin{equation*}
2\left\vert \left\vert \left\vert A^{\frac{1}{2}}XB^{\frac{1}{2}}\right\vert
\right\vert \right\vert \leq \left\vert \left\vert \left\vert A^{\nu
}XB^{1-\nu }+A^{1-\nu }XB^{\nu }\right\vert \right\vert \right\vert \leq
\left\vert \left\vert \left\vert AX+XB\right\vert \right\vert \right\vert ,
\end{equation*}%
where $0\leq \nu \leq 1$. These inequalities are known as Heinz norm
inequalities.\ \ \ \ \ \ \ \ \ \ \ \ \ \ \ \ \ \ \ \ \ \ \ \ \ \ \ \ \ \ \ \
\ \ \ \ \ \ \ \ \ \ \ \ \ \ \ \ \ \ \ \ \ \ \ \ \ \ \ \ \ \ \ \ \ \ \ \ \ \
\ \ \ \ \ \ \ \ \ \ \ \ \ \ \ \ \ \ \ \ \ \ \ \ \ \ \ \ \ \ \ \ \ \ \ \ \ \
\ \ \ \ \ \ \ \ \ \ \ \ \ \ \ \ \ \ \ \ \ \ \ \ \ \ \ \ \ \ \ \ \ \ \ \ \ \
\ \ \ \ \ \ \ \ \ \ \ \ \ \ \ \ \ \ \ \ \ \ \ \ \ \ \ \ \ \ \ \ \ \ \ \ \ \
\ \ \ \ \ \ \ \ \ \ \ \ \ \ \ \ \ \ \ \ \ \ \ \ \ \ \ \ \ \ \ \ \ \ \ \ \ \
\ \ \ \ \ \ \ \ \ \ \ \ \ \ \ \ \ \ \ \ \ \ \ \ \ \ \ \ \ \ \ \ \ \ 

Kittaneh \cite{R5} proved that if $A,B,X\in M_{n}(%
\mathbb{C}
)$ such that $A$ and $B$ are positive semidefinite, then

\begin{equation}
2r_{0}\left( \left\vert \left\vert \left\vert AX+XB\right\vert \right\vert
\right\vert -2\left\vert \left\vert \left\vert A^{\frac{1}{2}}XB^{\frac{1}{2}%
}\right\vert \right\vert \right\vert \right) \leq \left\vert \left\vert
\left\vert AX+XB\right\vert \right\vert \right\vert -\left\vert \left\vert
\left\vert A^{\nu }XB^{1-\nu }+A^{1-\nu }XB^{\nu }\right\vert \right\vert
\right\vert ,  \label{ineq 22}
\end{equation}%
where $0\leq \nu \leq 1$ and $r_{0}=\min \{\nu ,1-\nu \}.$

In the next theorem, we obtain a reverse of the inequalitiy (\ref{ineq 22})
as follows.

\begin{theorem}
\label{T3.2} If $A,B,X\in M_{n}(%
\mathbb{C}
)$ such that $A$ and $B$ are positive semidefinite, then 
\begin{equation}
\left\vert \left\vert \left\vert AX+XB\right\vert \right\vert \right\vert
-\left\vert \left\vert \left\vert A^{\nu }XB^{1-\nu }+A^{1-\nu }XB^{\nu
}\right\vert \right\vert \right\vert \leq 2R_{0}\left( \left\vert \left\vert
\left\vert AX+XB\right\vert \right\vert \right\vert -2\left\vert \left\vert
\left\vert A^{\frac{1}{2}}XB^{\frac{1}{2}}\right\vert \right\vert
\right\vert \right) ,  \label{ineq 23}
\end{equation}%
where $\ 0\leq \nu \leq 1$ and $R_{0}=\max \{\nu ,1-\nu \}$.\ \ \ \ \ \ \ \
\ \ \ \ \ \ \ \ \ \ \ \ \ \ \ \ \ \ \ \ \ \ \ \ \ \ \ \ \ \ \ \ \ \ \ \ \ \
\ \ \ \ \ \ \ \ \ \ \ \ \ \ \ \ \ \ \ \ \ \ \ \ \ \ \ \ \ \ \ \ \ \ \ \ \ \
\ \ \ \ \ \ \ \ \ 
\end{theorem}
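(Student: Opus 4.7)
The plan is to exploit the convexity of
\[
f(\nu) := \left\vert \left\vert \left\vert A^{\nu} X B^{1-\nu} + A^{1-\nu} X B^{\nu} \right\vert \right\vert \right\vert
\]
on $[0,1]$, a fact recalled in the paragraph just before the statement of Theorem~\ref{T3.2}. For bookkeeping I abbreviate $M := \left\vert \left\vert \left\vert AX+XB \right\vert \right\vert \right\vert$ and $m := 2\left\vert \left\vert \left\vert A^{1/2} X B^{1/2} \right\vert \right\vert \right\vert$; a direct substitution shows $f(0)=f(1)=M$ and $f(1/2)=m$, while the Heinz norm inequality supplies $m \leq f(\nu) \leq M$.

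The key step is to produce a linear lower bound for $f(\nu)$ that is tight at the midpoint $\nu = 1/2$ and at the nearer endpoint. For $\nu\in[0,1/2]$ I will write $\tfrac{1}{2}$ as the convex combination of $\nu$ and $1$ with weights $\tfrac{1}{2(1-\nu)}$ and $\tfrac{1/2-\nu}{1-\nu}$, then apply convexity to bound $f(1/2)=m$ from above in terms of $f(\nu)$ and $M$. Solving for $f(\nu)$ and subtracting the result from $M$ yields $M - f(\nu) \leq 2(1-\nu)(M-m) = 2R_{0}(M-m)$, since $R_{0}=1-\nu$ throughout this range.

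For $\nu\in[1/2,1]$ I will use the evident symmetry $f(\nu)=f(1-\nu)$ to reduce to the previous case with $\nu$ replaced by $1-\nu$; now $R_{0}=\nu$ and the same bound falls out. Stitching the two ranges together gives~(\ref{ineq 23}).

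The main obstacle is essentially bookkeeping: one must verify that the chosen convex-combination weights are non-negative on each subinterval and that the Heinz inequality keeps both $M-f(\nu)$ and $M-m$ non-negative so that the rearrangements into the form~(\ref{ineq 23}) are valid. Beyond the convexity of $f$ and the Heinz norm inequality, no further matrix-analytic input is required; in particular, Theorem~\ref{T2.1} is not obviously needed here, and I note as a sanity check that the coarser estimate $f(\nu)\geq m$ combined with $2R_{0}\geq 1$ already delivers (\ref{ineq 23}) in a less sharp form, which confirms the direction of the inequality.
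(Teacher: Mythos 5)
Your proposal is correct and essentially reproduces the paper's argument: the paper likewise derives $M-f(\nu)\leq 2(1-\nu)(M-m)$ for $0<\nu<\tfrac{1}{2}$ from the convexity of $f$ (phrased as the slope inequality through the points $\nu$, $\tfrac{1}{2}$, $1$ rather than as a convex combination, which is the identical computation), and it handles $\tfrac{1}{2}<\nu<1$ by the symmetric slope inequality through $0$, $\tfrac{1}{2}$, $\nu$ where you instead invoke $f(\nu)=f(1-\nu)$. Incidentally, your closing ``sanity check'' is already a complete one-line proof of the stated inequality, since $f(\nu)\geq m$ and $2R_{0}\geq 1$ give $M-f(\nu)\leq M-m\leq 2R_{0}(M-m)$ directly.
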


\begin{proof}
If $\nu =0$, $\nu =\frac{1}{2}$, or $\nu =1$, then the inequality (\ref{ineq
23}) is obviously true. Suppose that $0<\nu <1$, $\nu \neq \frac{1}{2}$.

If $0<\nu <\frac{1}{2}<1$, then based on the convexity of the function $%
f(\nu )=\left\vert \left\vert \left\vert A^{\nu }XB^{1-\nu }+A^{1-\nu
}XB^{\nu }\right\vert \right\vert \right\vert $, we have%
\begin{equation*}
\frac{f\left( \frac{1}{2}\right) -f\left( \nu \right) }{\frac{1}{2}-\nu }%
\leq \frac{f\left( 1\right) -f\left( \nu \right) }{1-\nu },
\end{equation*}%
and so \ \ \ \ \ \ \ \ \ \ \ \ \ \ \ \ \ \ \ \ \ \ \ \ \ \ \ \ \ \ \ \ \ \ \
\ \ \ \ \ \ \ \ \ \ \ \ \ \ \ \ \ \ \ \ \ \ \ \ \ \ \ \ \ \ \ \ \ \ \ \ \ \
\ \ \ \ \ \ \ \ \ \ \ 
\begin{equation*}
(1-\nu )\left( f\left( \frac{1}{2}\right) -f\left( \nu \right) \right) \leq
\left( \frac{1}{2}-\nu \right) \left( f\left( 1\right) -f\left( \nu \right)
\right) .
\end{equation*}%
$.$

Adding $f(1)$ to both sides, gives%
\begin{equation*}
f(1)-f(\nu )\leq 2(1-\nu )\left( f(1)-f\left( \frac{1}{2}\right) \right) .
\end{equation*}%
i.e.,%
\begin{equation*}
\left\vert \left\vert \left\vert AX+XB\right\vert \right\vert \right\vert
-\left\vert \left\vert \left\vert A^{\nu }XB^{1-\nu }+A^{1-\nu }XB^{\nu
}\right\vert \right\vert \right\vert \leq 2(1-\nu )\left( \left\vert
\left\vert \left\vert AX+XB\right\vert \right\vert \right\vert -2\left\vert
\left\vert \left\vert A^{\frac{1}{2}}XB^{\frac{1}{2}}\right\vert \right\vert
\right\vert \right) .
\end{equation*}

If $0<\frac{1}{2}<\nu <1$, then%
\begin{equation*}
\frac{f\left( \frac{1}{2}\right) -f\left( 0\right) }{\frac{1}{2}-0}\leq 
\frac{f(\nu )-f(0)}{\nu -0},
\end{equation*}%
and so%
\begin{equation*}
f\left( 0\right) -f\left( \nu \right) \leq 2\nu \left( f\left( 0\right)
-f\left( \frac{1}{2}\right) \right) ,
\end{equation*}%
i.e.,%
\begin{equation*}
\left\vert \left\vert \left\vert AX+XB\right\vert \right\vert \right\vert
-\left\vert \left\vert \left\vert A^{\nu }XB^{1-\nu }+A^{1-\nu }XB^{\nu
}\right\vert \right\vert \right\vert \leq 2\nu \left( \left\vert \left\vert
\left\vert AX+XB\right\vert \right\vert \right\vert -2\left\vert \left\vert
\left\vert A^{\frac{1}{2}}XB^{\frac{1}{2}}\right\vert \right\vert
\right\vert \right) .
\end{equation*}

Thus, from the above two norm inequalities, we get the inequality (\ref{ineq
23}).
\end{proof}

The inequalities (\ref{ineq 22}) and (\ref{ineq 23}) can be combined so that%
\begin{eqnarray}
2r_{0}\left( \left\vert \left\vert \left\vert AX+XB\right\vert \right\vert
\right\vert -2\left\vert \left\vert \left\vert A^{\frac{1}{2}}XB^{\frac{1}{2}%
}\right\vert \right\vert \right\vert \right) &\leq &\left\vert \left\vert
\left\vert AX+XB\right\vert \right\vert \right\vert -\left\vert \left\vert
\left\vert A^{\nu }XB^{1-\nu }+A^{1-\nu }XB^{\nu }\right\vert \right\vert
\right\vert  \notag \\
&\leq &2R_{0}\left( \left\vert \left\vert \left\vert AX+XB\right\vert
\right\vert \right\vert -2\left\vert \left\vert \left\vert A^{\frac{1}{2}%
}XB^{\frac{1}{2}}\right\vert \right\vert \right\vert \right) .
\label{ineq 24}
\end{eqnarray}

Applying Theorem \ref{T2.1} to the inequalities (\ref{ineq 24}), we have the
following general result.

\begin{corollary}
\label{C3.1}Let $A,B,X\in M_{n}(%
\mathbb{C}
)$\ such that $A$\ and $B$\ are positive semidefinite. If $\phi :[0,\infty
)\rightarrow 
\mathbb{R}
$ is a stricly increasing convex function, \ then we have%
\begin{eqnarray*}
&&\phi \left( 2r_{0}\left\vert \left\vert \left\vert AX+XB\right\vert
\right\vert \right\vert \right) -\phi \left( 4r_{0}\left\vert \left\vert
\left\vert A^{\frac{1}{2}}XB^{\frac{1}{2}}\right\vert \right\vert
\right\vert \right) \\
&\leq &\phi \left( \left\vert \left\vert \left\vert AX+XB\right\vert
\right\vert \right\vert \right) -\phi \left( \left\vert \left\vert
\left\vert A^{\nu }XB^{1-\nu }+A^{1-\nu }XB^{\nu }\right\vert \right\vert
\right\vert \right) \\
&\leq &\phi \left( 2R_{0}\left\vert \left\vert \left\vert AX+XB\right\vert
\right\vert \right\vert \right) -\phi \left( 4R_{0}\left\vert \left\vert
\left\vert A^{\frac{1}{2}}XB^{\frac{1}{2}}\right\vert \right\vert
\right\vert \right) ,
\end{eqnarray*}%
where $0\leq \nu \leq 1$, $r_{0}=\min \left\{ \nu ,1-\nu \right\} $, and \ $%
R_{0}=\max \left\{ \nu ,1-\nu \right\} $.
\end{corollary}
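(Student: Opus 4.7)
The plan is to mimic the proof of Corollary \ref{C2.1}, replacing the scalar Young inequalities by their unitarily invariant norm analogues from (\ref{ineq 24}) and using the Heinz norm inequality to verify the ordering hypotheses required by Theorem \ref{T2.1}.

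Set
\begin{equation*}
x = \left\vert \left\vert \left\vert AX+XB\right\vert \right\vert \right\vert, \qquad y = \left\vert \left\vert \left\vert A^{\nu}XB^{1-\nu}+A^{1-\nu}XB^{\nu}\right\vert \right\vert \right\vert,
\end{equation*}
\begin{equation*}
z = 2r_{0}\left\vert \left\vert \left\vert AX+XB\right\vert \right\vert \right\vert, \qquad w = 4r_{0}\left\vert \left\vert \left\vert A^{\frac{1}{2}}XB^{\frac{1}{2}}\right\vert \right\vert \right\vert,
\end{equation*}
\begin{equation*}
z' = 2R_{0}\left\vert \left\vert \left\vert AX+XB\right\vert \right\vert \right\vert, \qquad w' = 4R_{0}\left\vert \left\vert \left\vert A^{\frac{1}{2}}XB^{\frac{1}{2}}\right\vert \right\vert \right\vert.
\end{equation*}
Then (\ref{ineq 24}) yields the chain $z-w \leq x-y \leq z'-w'$, which is the differential hypothesis (\ref{ineq 12}) needed for Theorem \ref{T2.1} in each half.

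Next I would check the ordering constraints. For the lower bound, I need $w \leq z \leq x$ and $y \leq x$. The inequality $z \leq x$ is immediate since $2r_{0} \leq 1$, and the inequalities $w \leq z$ and $y \leq x$ are precisely the Heinz norm inequalities $2\left\vert \left\vert \left\vert A^{\frac{1}{2}}XB^{\frac{1}{2}}\right\vert \right\vert \right\vert \leq \left\vert \left\vert \left\vert A^{\nu}XB^{1-\nu}+A^{1-\nu}XB^{\nu}\right\vert \right\vert \right\vert \leq \left\vert \left\vert \left\vert AX+XB\right\vert \right\vert \right\vert$ recalled earlier (the first of these, combined with $4r_0 = 2\cdot 2r_0$, gives $w\le z$). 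For the upper bound, I need $y \leq x \leq z'$ and $w' \leq z'$, which follow from the Heinz inequality again together with $2R_{0} \geq 1$.

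With these orderings verified, a single application of Theorem \ref{T2.1} to the pair $(x,y,z,w)$ gives the first inequality of the corollary, and a second application to $(z',w',x,y)$ (that is, with $z'$ playing the role of $x$, $w'$ the role of $y$, $x$ the role of $z$, and $y$ the role of $w$) gives the second inequality. The main subtlety, and essentially the only non-cosmetic point, is checking these chain-of-inequalities hypotheses; once the Heinz norm inequality is invoked, the rest is a direct quotation of Theorem \ref{T2.1}, so no additional computation is required.
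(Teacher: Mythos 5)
Your proposal is correct and follows exactly the route the paper intends: the paper derives Corollary \ref{C3.1} by applying Theorem \ref{T2.1} to the combined inequalities (\ref{ineq 24}), with the ordering hypotheses supplied by the Heinz norm inequalities and the bounds $2r_{0}\leq 1\leq 2R_{0}$, precisely as in the proof of Corollary \ref{C2.1} which you mimic. Your explicit verification of the hypotheses $w\leq z\leq x$, $y\leq x$ and $y\leq x\leq z^{\prime}$, $w^{\prime}\leq z^{\prime}$ is the same (implicit) content of the paper's one-line derivation, so there is nothing to add.
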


In particular, if $\phi (x)=x^{q}$ $\left( q\in 
\mathbb{R}
,q\geq 1\right) $ and $\left\vert \left\vert \left\vert .\right\vert
\right\vert \right\vert =\left\Vert .\right\Vert _{p}$ (the Schatten p-norm $%
p\in 
\mathbb{R}
,$ $p\geq 1$), we have 
\begin{eqnarray*}
(2r_{0})^{q}\left( \left\Vert AX+XB\right\Vert _{p}^{q}-2^{q}\left\Vert A^{%
\frac{1}{2}}XB^{\frac{1}{2}}\right\Vert _{p}^{q}\right) &\leq &\left\Vert
AX+XB\right\Vert _{p}^{q}-\left\Vert A^{\nu }XB^{1-\nu }+A^{1-\nu }XB^{\nu
}\right\Vert _{p}^{q} \\
&\leq &(2R_{0})^{q}\left( \left\Vert AX+XB\right\Vert
_{p}^{q}-2^{q}\left\Vert A^{\frac{1}{2}}XB^{\frac{1}{2}}\right\Vert
_{p}^{q}\right) .
\end{eqnarray*}

\end{document}